\newtheorem{theorem}{Theorem}
\newtheorem{corollary}[theorem]{Corollary}
\newtheorem{conjecture}[theorem]{Conjecture}
\theoremstyle{remark}
\newcommand{\RR}{\mathbb{R}}
\newcommand{\R}{\mathbb{R}}
\newcommand{\NN}{\mathbb{N}}
\DeclareMathOperator{\conv}{conv}
\DeclareMathOperator{\diam}{diam}
\DeclareMathOperator{\vol}{vol}
\DeclareMathOperator{\inter}{int}
\author{V\'ictor Hugo Almendra-Hern\'andez, Gergely Ambrus, \\Matthew Kendall}
\title{Quantitative Helly-type theorems via sparse approximation}
\thanks{Research of the second named author was supported by NKFIH grant KKP-133819, by the Ministry of Innovation and
Technology of Hungary from the National Research, Development
and Innovation Fund, project no. TKP2021-NVA-09,  and by the EFOP-3.6.1-16-2016-00008 project, which in turn has been supported by the European Union, co-financed by the European Social Fund.
 }
\keywords{Helly-type theorem, volume, diameter, sparse approximation, John's ellipsoid.  }
\subjclass[2020]{ 52A35, 52A27}
\date{\today}
\begin{document}

\begin{abstract}
We prove the following sparse approximation result for polytopes. Assume that $Q \subset \R^d$ is a polytope in John's position. Then there exist at most $2d$ vertices of $Q$ whose convex hull $Q'$ satisfies $Q \subseteq - 2d^2 \, Q'$. As a consequence, we retrieve the best bound for the quantitative Helly-type result for the volume, achieved by Brazitikos, and improve on the strongest  bound for the quantitative Helly-type theorem for the diameter, shown by Ivanov and Nasz\'odi: We prove that given a finite family $\mathcal{F}$ of convex bodies in $\R^d$ with intersection $K$, we may select at most $2 d$ members of $\mathcal{F}$  such that their intersection has volume at most $(c d)^{3d /2} \vol K$, and it has diameter at most $2 d^2 \diam K$, for some absolute constant $c>0$.

\end{abstract}

\maketitle

\section{History and results}

Helly's theorem, dated from 1923 \cite{H23}, is a cornerstone result in convex geometry.
Its finitary version states that the intersection of a finite family of convex sets in $\R^d$ is empty if and only if there exists a subfamily of $d+1$ sets such that its intersection is empty.
In 1982, B\'ar\'any, Katchalski and Pach \cite{BKP82} introduced the following quantitative versions of Helly's theorem: there exist positive constants $v(d),\delta(d)$ such that for a finite family $\mathcal{F}$ of convex bodies (that is, compact convex sets with non-empty interior) in $\R^d$, one may select $2d$ members such that their intersection has volume at most $v(d) \operatorname{vol} (\bigcap \mathcal{F})$, or has diameter at most $\delta(d) \operatorname{diam} ( \bigcap \mathcal{F})$.

The problem of finding the optimal values of $\delta(d)$ and $v(d)$ has enjoyed special interest in recent years (see e.g. the excellent survey article \cite{BK21}). In \cite{BKP82} (see also \cite{BKP84}) the authors proved that $v(d) \leq d^{2 d^2}$ and $\delta(d) \leq d^{2d}$, and they conjectured that $v(d) \approx d^{c_1 d}$ and $\delta(d) \approx c_2 d^{1/2}$ for some positive constants $c_1, c_2 >0$.

For the volume problem, in a breakthrough paper, Nasz\'odi  \cite{N16} proved that $v(d) \leq e^{d+1} d^{2d + \frac 1 2}$, while $v(d) \geq d^{d/2}$ must hold. Improving upon his ideas, Brazitikos \cite{B17} found the current best upper bound for volume: $v(d) \le (c d)^{3d/2}$ for a constant $c>0$.

For the diameter question, Brazitikos \cite{B18} proved the first polynomial bound on $\delta(d)$ by showing that $\delta(d) \leq c d^{11/2}$ for some $c>0$. In 2021, Ivanov and Nasz\'odi \cite{IN21} found the best known upper bound, $\delta(d) \le (2d)^{3}$, and also proved that $\delta(d) \geq c d^{1/2}$. Thus, the value conjectured in  \cite{BKP82} for $\delta(d)$ would be asymptotically sharp.

In the present note, we show that given a finite family $\mathcal{F}$ of closed convex sets, one can select at most $2d$ members such that their intersection sits inside a scaled version of $\bigcap \mathcal{F}$ for a suitable location of the origin.
Clearly, it suffices to prove this statement for the special case when $\mathcal{F}$ consists of closed halfspaces intersecting in a convex body.
As an application, we obtain an improvement on the diameter bound, $\delta(d) \le 2d^2$, and retrieve the best known bound for $v(d)$.
The crux of the argument is the following  sparse approximation result for polytopes, which is a strengthening of Theorem 2 in \cite{IN21}.

\begin{theorem}
    \label{thm:homothet}
    Let $ \lambda > 0$ and $Q \subset \RR^d$ be a convex polytope
    such that $Q \subseteq - \lambda Q$.
    Then there exist at most $2d$ vertices of $Q$ whose convex hull $Q'$ satisfies
        \begin{equation*}
        Q \subseteq - ( \lambda + 2)d \, Q'.
    \end{equation*}
\end{theorem}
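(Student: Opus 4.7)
The plan is to reformulate $Q \subseteq -(\lambda+2)d\, Q'$ dually as a support-function inequality and then produce the $2d$ vertices via a John-type decomposition applied to a suitable symmetrization of $Q$. Since linear maps fix the origin, they preserve both the hypothesis and the desired conclusion, so I am free to apply any such transformation to position $Q$ conveniently. The target inclusion is equivalent to $h_Q(u) \le (\lambda+2)d\, h_{-Q'}(u)$ for every $u \in \R^d$; that is, for every direction $u$ some chosen vertex $v'$ must satisfy $\langle -v', u\rangle \ge h_Q(u)/((\lambda+2)d)$.

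The central device is to work with the centrally symmetric polytope $M := \conv(Q \cup (-Q))$. Since $-Q \subseteq \lambda Q$ follows from the hypothesis, one has $Q \subseteq M \subseteq \lambda Q$, and the vertices of $M$ form a subset of the union of the vertex sets of $Q$ and $-Q$. I would apply a linear transformation placing $M$ in John's position, so that its maximum-volume inscribed ellipsoid is $B^d$; John's theorem for symmetric bodies then gives $B^d \subseteq M \subseteq \sqrt{d}\,B^d$, and John's characterization supplies contact points $u_1,\dots,u_k \in \partial M \cap S^{d-1}$ together with weights $\beta_i>0$ satisfying $\sum_i \beta_i u_i u_i^T = I_d$. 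Each $u_i$ lies either in $\partial Q$ or in $-\partial Q$, so it can be associated with a vertex $v_i$ of $Q$ on the corresponding face: directly on the $Q$-side, and at the cost of a factor $\lambda$ on the $-Q$-side through the inclusion $-Q \subseteq \lambda Q$.

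The main technical step is a Carath\'eodory-type sparsification tailored to this setting that realizes the John decomposition with at most $2d$ contact points, balanced as $d$ from each side of $M$. Given this, the verification proceeds by support functions: for any unit $u$, the identity $\sum_i \beta_i \langle u_i, u\rangle^2 = 1$ forces $|\langle u_i,u\rangle| \ge 1/\sqrt{d}$ for some $i$, and translating this from $u_i$ to the associated vertex $v_i$ of $Q$ (using the size control $Q \subseteq \sqrt{d}\,B^d$ and the factor $\lambda$ incurred on the $-Q$-side) delivers the lower bound $h_{-Q'}(u) \ge h_Q(u)/((\lambda+2)d)$. The factor $\lambda$ is absorbed by the side switch, the $d$ comes from the John bound, and the additive $+2$ accounts for the slack between contact points and actual vertices.

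The main obstacle will be exactly the sparsification that cuts the number of John contact points down to $2d$ while preserving the constant $(\lambda+2)d$. A naive matrix-Carath\'eodory argument on the $\tfrac{d(d+1)}{2}$-dimensional space of symmetric matrices gives only $O(d^2)$ terms; obtaining $2d$ requires a finer argument exploiting the two-sided structure of $M$ and the hypothesis $Q \subseteq -\lambda Q$ to balance contributions from the $Q$-side and the $-Q$-side. This is precisely where the improvement over Theorem~2 of Ivanov--Nasz\'odi should come from.
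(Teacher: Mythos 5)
Your proposal contains a genuine gap at precisely the point you yourself flag as ``the main technical step'': the sparsification of the John decomposition of $M=\conv(Q\cup(-Q))$ down to $2d$ contact points while keeping the isotropy identity usable. No such sparsification exists in general --- the John decomposition of a symmetric body genuinely requires up to $d(d+1)/2$ contact points, and the matrix-Carath\'eodory obstruction you mention is real, not an artifact of a naive argument. The known $O(d)$-term substitutes (approximate isotropic decompositions in the spirit of Batson--Spielman--Srivastava, as used by Brazitikos for the volume bound) are only approximate and do not obviously survive with the clean constant $(\lambda+2)d$ when restricted to exactly $2d$ terms. In fact, the assertion that one can pick at most $2d$ John contact points whose convex hull contains a $\frac{1}{cd}$-scaled ball is essentially Conjecture~\ref{conj:john-pos} of the paper, which is stated as open; so your outline reduces the theorem to a statement that is at least as hard as what the authors could not prove. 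Secondary issues: a contact point of the John ellipsoid with $\partial M$ need not lie in $\partial Q\cup\partial(-Q)$ (it can sit on a facet of $M$ spanned jointly by vertices of $Q$ and of $-Q$), and the claim that the additive ``$+2$'' absorbs ``the slack between contact points and actual vertices'' is never quantified.

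The paper's proof avoids John's theorem and symmetrization entirely, and you should compare your plan against it. It selects a maximal-volume simplex $S=\conv\{0,v_1,\dots,v_d\}$ with $v_1,\dots,v_d$ vertices of $Q$; maximality forces every vertex of $Q$ into the parallelotope $P=\{\sum\beta_iv_i:\beta_i\in[-1,1]\}$, and $P$ in turn lies in the reflected, $2d$-scaled simplex $S'=-2dS+(v_1+\cdots+v_d)$. The remaining at most $d$ vertices come from a single application of Carath\'eodory's theorem to the point where the ray from $0$ through $-u$ meets $\partial Q$, where $u$ is the centroid of $\conv\{v_1,\dots,v_d\}$; this guarantees $S\subseteq Q'$ and $u\in-\lambda Q'$, and chaining the inclusions yields $Q\subseteq P\subseteq S'=-2dS+du\subseteq-(\lambda+2)d\,Q'$. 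This is where the factor $(\lambda+2)d$ actually comes from: $2d$ from the reflected simplex and $\lambda d$ from relocating $du$, not from a John-type ball estimate. To make your route work you would first have to prove Conjecture~\ref{conj:john-pos}, which would in fact give a stronger (asymptotically optimal) result.
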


We immediately obtain the following corollary.

\begin{corollary}\label{cor1}
Assume that $ Q = -Q$ is a symmetric convex polytope in $\RR^d$. Then we may select a set of at most $2d$ vertices of $Q$ with convex hull $Q'$ such that
\begin{equation*}
        Q \subseteq 3 d \, Q'.
    \end{equation*}
\end{corollary}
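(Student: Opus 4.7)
The proof is a direct application of Theorem \ref{thm:homothet}. The plan is to observe that the hypothesis $Q = -Q$ means the containment $Q \subseteq -\lambda Q$ is satisfied with equality for $\lambda = 1$, which is the smallest value of $\lambda$ for which Theorem \ref{thm:homothet} can be applied.

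With this choice of $\lambda$, Theorem \ref{thm:homothet} furnishes at most $2d$ vertices of $Q$ whose convex hull $Q'$ satisfies
\begin{equation*}
    Q \subseteq -(\lambda + 2)d\, Q' = -3d\, Q'.
\end{equation*}
Since $Q$ is centrally symmetric, $-Q = Q$, so we can negate both sides: taking $-1$ times the inclusion gives $-Q \subseteq 3d\, Q'$, and replacing $-Q$ by $Q$ yields the desired $Q \subseteq 3d\, Q'$.

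There is essentially no obstacle to overcome; the only point to verify is the bookkeeping of the sign, which works out because the symmetry assumption lets us convert the statement about $-\mu Q'$ (the form in which Theorem \ref{thm:homothet} delivers its conclusion) into one about $\mu Q'$ without losing any factor. Consequently the proof is a single short paragraph.
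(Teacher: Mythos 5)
Your proof is correct and is exactly the one-line deduction the paper intends, since the paper presents Corollary~\ref{cor1} as an immediate consequence of Theorem~\ref{thm:homothet}: take $\lambda = 1$ (valid since $Q = -Q$ gives $Q \subseteq -Q$), obtain $Q \subseteq -3d\,Q'$, and use the symmetry $Q = -Q$ to drop the minus sign. The sign bookkeeping is handled correctly.
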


As usual, let $B^d$ denote the unit ball of $\RR^d$ and let $S^{d-1}$ be the unit sphere of $\RR^d$. A standard consequence of Fritz John's theorem \cite{J48} states that if $K \subset \RR^d$ is a convex body in John's position, that is, the largest volume ellipsoid inscribed in $K$ is $B^d$,  then $B^d \subseteq K \subseteq d B^d \subseteq - d K$ (see e.g. \cite{B97}). Thus, we derive

\begin{corollary}\label{cor2}
Assume that $ Q \subset \RR^d$ is a convex polytope in John's position. Then there exists a subset of at most $2d$ vertices of $Q$ whose convex hull $Q'$ satisfies
\begin{equation*}
        Q \subseteq - 2 d^2 \, Q'.
    \end{equation*}
\end{corollary}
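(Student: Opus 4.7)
My plan is to derive Corollary 2 as a short and direct consequence of Theorem \ref{thm:homothet}, using John's theorem to supply the hypothesis on $Q$ with an explicit value of $\lambda$.

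First, I would invoke the consequence of Fritz John's theorem already recalled in the paragraph preceding the corollary: since $Q$ is in John's position, the inscribed unit ball $B^d$ and the bounding ball $d B^d$ yield the chain $B^d \subseteq Q \subseteq d B^d \subseteq -d Q$. In particular, $Q \subseteq -d Q$, so the hypothesis of Theorem \ref{thm:homothet} holds with $\lambda = d$.

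Second, I would apply Theorem \ref{thm:homothet} directly with this value of $\lambda$. It produces a set of at most $2d$ vertices of $Q$ whose convex hull $Q'$ satisfies
\begin{equation*}
Q \subseteq -(\lambda + 2)d\, Q' = -(d+2)d\, Q' = -(d^2 + 2d)\, Q'.
\end{equation*}

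Finally, I would conclude by observing the elementary inequality $d^2 + 2d \leq 2d^2$, which holds for all $d \geq 2$, and noting that for $d = 1$ the statement is trivial since any symmetric polytope is its own reflection. Combining the inclusion above with this estimate yields $Q \subseteq -2d^2\, Q'$, as required. There is no real obstacle in this argument; the entire content sits in Theorem \ref{thm:homothet}, and the only thing to check is that John's position produces an admissible value of $\lambda$ small enough that $(\lambda+2)d$ fits within the advertised bound $2d^2$.
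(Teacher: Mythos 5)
Your proposal is correct and follows exactly the paper's intended derivation: John's position gives $Q \subseteq dB^d \subseteq -dQ$, so Theorem~\ref{thm:homothet} applies with $\lambda = d$, and $(d+2)d \le 2d^2$ for $d \ge 2$ (with $d=1$ handled trivially). This matches the paper's one-line deduction of the corollary.
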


\medskip

For $n \in \NN^+$, we will use the notation $[n] = \{1, \dots,n\}$.
For a family of sets $\{ K_1, \ldots, K_n\} \subset \RR^d$ and for a subset $\sigma \subset [n]$, let
\[
K_\sigma = \bigcap_{i \in \sigma} K_i,
\]
as in \cite{IN21}. Also, let $\binom{[n]}{\leq k}$ stand for the set of all subsets of $[n]$ with cardinality at most $k$. Using this terminology, we are ready to state our quantitative Helly-type result.

\begin{theorem}
\label{thm:main}
    Let $\{K_1,\ldots,K_n\}$ be a family of closed convex sets in $\RR^d$ with $d \geq 2$  such that their intersection $K = K_1 \cap \cdots \cap K_n$ is a convex body.
    Then there exists a $\sigma \in \binom{[n]}{\le 2d}$ such that
    \begin{equation*}
        \vol_d K_{\sigma} \le (cd)^{3d/2} \vol_d K \;\;\text{and}\;\;
        \diam K_{\sigma} \le 2 d^2 \diam K
    \end{equation*}
    for some constant $c > 0$.
\end{theorem}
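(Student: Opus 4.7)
The plan is to reduce Theorem~\ref{thm:main} to the sparse approximation result of Theorem~\ref{thm:homothet}, applied via Corollary~\ref{cor2} to the polar of $K$. First, I would replace each $K_i$ by its family of supporting halfspaces; this enlarges the family but keeps the intersection $K$, and any selection of at most $2d$ halfspaces translates to a selection of at most $2d$ original sets (with intersection at least as large as the selected halfspace intersection, so the two required bounds transfer). Thus without loss of generality each $K_i$ is a halfspace $\{x : \langle a_i, x \rangle \le 1\}$, with $K = \bigcap K_i$ containing $0$ in its interior. Next, I would apply an affine transformation to place $K$ in John's position, so $B^d \subseteq K \subseteq d B^d$, hence $K \subseteq -d K$. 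Volume ratios are preserved by this reduction, and the containment $K_\sigma \subseteq -\lambda K$ I derive below pulls back under the inverse affine map to a translate of $-\lambda K$, whose diameter is $\lambda \diam K$.

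For the diameter bound, consider $Q = K^\circ$. After discarding halfspaces that do not define facets of $K$, the vertices of $Q$ are exactly the vectors $\{a_i\}$, and polarity converts $K \subseteq -dK$ into $Q \subseteq -d Q$. Applying Theorem~\ref{thm:homothet} with $\lambda = d$ produces a subset $\sigma \in \binom{[n]}{\le 2d}$ such that $Q' = \conv\{a_i : i \in \sigma\}$ satisfies $Q \subseteq -(d+2) d \, Q' \subseteq -2 d^2 \, Q'$ (using $d \ge 2$). Polarizing and noting that $(Q')^\circ = \bigcap_{i \in \sigma} \{x : \langle a_i, x\rangle \le 1\} = K_\sigma$, we obtain $K_\sigma \subseteq -2 d^2 K$, so $\diam K_\sigma \le 2 d^2 \diam K$ as required.

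For the volume bound, the containment alone gives only $\vol K_\sigma \le (2d^2)^d \vol K$, which is of order $d^{2d}$ and hence weaker than the target $(cd)^{3d/2}$. To recover Brazitikos's sharper bound, I would exploit the structure of the $2d$ vertices produced in the proof of Theorem~\ref{thm:homothet}: these should be drawn from a John-type decomposition of $I_d$ by contact points, so that the collection $\{a_i : i \in \sigma\}$ is a near-isotropic system of at most $2d$ unit vectors. A Ball--Barthe style inequality (as used in~\cite{B17}) then bounds $\vol K_\sigma = \vol (Q')^\circ$ from above by $(cd)^{3d/2} \vol K$. The main obstacle is to verify that the $2d$ vertices selected by the sparse approximation algorithm, originally chosen to control the containment $Q \subseteq -2d^2 Q'$, also satisfy the isotropy-type hypothesis of Ball's inequality, so that a single $\sigma$ of size at most $2d$ simultaneously delivers both bounds.
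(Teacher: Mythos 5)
Your reduction to halfspaces, the passage to John's position, and the diameter bound via $Q=\widetilde K^\circ$, Theorem~\ref{thm:homothet} with $\lambda=d$, and polarization are exactly the paper's argument, and that half is complete. The volume bound, however, is a genuine gap, and you have correctly identified it as such yourself: you propose to verify that the $\le 2d$ selected vertices form a near-isotropic system to which a Ball--Barthe type inequality applies, but this verification is not carried out, and in fact it is unlikely to succeed as stated. In the proof of Theorem~\ref{thm:homothet} the selected vertices are $v_1,\dots,v_d$ (the vertices of a maximal-volume simplex $S$ with apex at the origin) together with up to $d$ Carath\'eodory points $v_1',\dots,v_k'$ chosen only so that their convex hull captures one boundary point $y$; the latter carry no isotropy information whatsoever, so the hypothesis of a Brascamp--Lieb/Ball-type argument in the spirit of \cite{B17} is not available for this particular $\sigma$.

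The paper closes this gap by a different and softer mechanism that uses only objects already present in the proof of Theorem~\ref{thm:homothet}. From that proof one has $Q\subseteq P$, where $P$ is the parallelotope $\{\sum\beta_iv_i:\beta_i\in[-1,1]\}$, and \eqref{eq:6} gives $Q'\supseteq P':=-\tfrac{1}{2d^2}P$. Since $\widetilde K$ is in John's position, its contact points with $B^d$ are vertices of $Q$ lying on $S^{d-1}$, so the maximality of $S$ combined with the Dvoretzky--Rogers lemma \cite{DR50} yields the lower bound $\vol_d(S)\ge (d!)^{-1/2}d^{-d/2}$, hence a lower bound on $\vol_d(P')=(2d^2)^{-d}2^d d!\,\vol_d(S)$. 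Because $P'$ is centrally symmetric, the Blaschke--Santal\'o inequality bounds $\vol_d((P')^\circ)$ from above, and $\widetilde K_\sigma=(Q')^\circ\subseteq (P')^\circ$ together with $\widetilde K\supseteq B^d$ gives $\vol_d\widetilde K_\sigma/\vol_d\widetilde K\le (cd)^{3d/2}$. If you want to complete your write-up, replacing the isotropy step by this parallelotope--Santal\'o argument is the missing idea; no isotropy of the selected vertices is needed, only the maximality of $S$ and the presence of the John contact points among the vertices of $Q$.
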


To conclude the section we formulate the following conjecture, which was essentially stated already in \cite{BKP82}. This would imply the asymptotically sharp bound for $v(d)$, see the Remark after the proof of Theorem~\ref{thm:main}.

\begin{conjecture}\label{conj:john-pos}
Assume that $\{u_1, \ldots, u_n \}\subset S^{d-1}$ is a set of unit vectors satisfying the conditions of Fritz John's theorem. That is, there exist positive numbers $\alpha_1, \ldots, \alpha_n$ for which $\sum_{i=1}^n \alpha_i u_i =0$ and $\sum_{i=1}^n \alpha_i u_i \otimes u_i = I_d$, the identity operator on $\RR^d$. Then there exists a subset $\sigma \subset [n]$ with cardinality at most $2d$ so that
\[
 B^d \subset c \, d \conv \{ u_i: i \in \sigma  \}
\]
with an absolute constant $c>0$.
\end{conjecture}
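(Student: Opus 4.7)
The plan is to reduce the conjecture to a spectral sparsification question and extract the desired convex-hull inclusion from the resulting isotropic decomposition. A natural first move is to homogenize: set $\hat{u}_i = (u_i, 1) \in \RR^{d+1}$. The two John-type conditions merge into the single identity
\begin{equation*}
    \sum_{i=1}^{n} \alpha_i \, \hat{u}_i \otimes \hat{u}_i \;=\; \begin{pmatrix} I_d & 0 \\ 0 & d \end{pmatrix}.
\end{equation*}
All lifted vectors have common norm $\sqrt{2}$. The core step is to extract $\sigma \subseteq [n]$ with $|\sigma| \le 2d$ and positive weights $\beta_i$ satisfying
\begin{equation*}
    c_1 \begin{pmatrix} I_d & 0 \\ 0 & d \end{pmatrix} \;\preceq\; \sum_{i \in \sigma}\beta_i\, \hat{u}_i\otimes \hat{u}_i \;\preceq\; c_2 \begin{pmatrix} I_d & 0 \\ 0 & d \end{pmatrix}
\end{equation*}
for absolute constants $0 < c_1 \le c_2$. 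A Batson--Spielman--Srivastava sparsification applied in dimension $D = d+1$ with $k = 2d$ surviving terms does yield a finite constant ratio $c_2/c_1$, so the cardinality budget is at least plausible.

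Granting that the off-diagonal block $b := \sum_{i \in \sigma} \beta_i u_i$ vanishes exactly, the conclusion follows by a short calculation. Decomposing the matrix inequality yields $\sum \beta_i u_i \otimes u_i \succeq c_1 I_d$, $\sum \beta_i u_i = 0$, and $\sum \beta_i \le c_2 d$ by trace comparison. For each $v \in S^{d-1}$, since $|u_i \cdot v| \le 1$,
\begin{equation*}
    \sum_{i \in \sigma}\beta_i \, |u_i \cdot v| \;\ge\; \sum_{i \in \sigma}\beta_i (u_i \cdot v)^2 \;\ge\; c_1,
\end{equation*}
and the barycentric condition forces $\sum_{i \in \sigma} \beta_i (u_i \cdot v)^+ = \tfrac12 \sum_{i \in \sigma} \beta_i |u_i \cdot v| \ge c_1/2$. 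Hence
\begin{equation*}
    \max_{i \in \sigma} (u_i \cdot v) \;\ge\; \frac{c_1/2}{\sum_{i \in \sigma}\beta_i} \;\ge\; \frac{c_1}{2\,c_2\,d}.
\end{equation*}
Since $0 \in P' := \conv\{u_i : i \in \sigma\}$ and the support-function lower bound holds in every direction, $B^d \subseteq (2c_2/c_1)\,d\,P'$, as claimed.

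The principal obstacle is ensuring that the off-diagonal term $b$ vanishes \emph{exactly}. A generic BSS-style sparsification only controls the full lifted matrix up to a Loewner-order constant, and an elementary Schur-complement computation shows that this allows $|b|$ as large as $O(\sqrt{d})$. Such an error is catastrophic for the support-function estimate above: the sign of $b \cdot v$ can swing $\sum \beta_i (u_i \cdot v)^+$ below zero, overwhelming the bound $\sum \beta_i |u_i \cdot v| \ge c_1$. Enforcing $b = 0$ would require either a constrained barrier method that treats the barycenter as a $d$-dimensional linear side condition throughout the sparsification---likely at the cost of extra surviving vertices, which strains the budget of $2d$---or an altogether new extraction scheme that exploits both John conditions in concert. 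Since even the relaxed $|\sigma| \le Cd$ version of the conjecture would already imply the asymptotically sharp bound $v(d) \lesssim (cd)^d$, and that sharp bound is still unknown, this step is a genuine stumbling block.
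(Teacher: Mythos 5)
This statement is explicitly left as an open conjecture in the paper --- there is no proof of it in the text, and the authors note only that its resolution would yield the asymptotically sharp bound for $v(d)$. Your proposal therefore cannot be checked against a reference argument, and on its own terms it does not close the conjecture. The second half of your argument is sound: \emph{if} one has $\sigma$ with $|\sigma|\le 2d$ and positive weights $\beta_i$ satisfying $\sum_{i\in\sigma}\beta_i u_i\otimes u_i\succeq c_1 I_d$, $\sum_{i\in\sigma}\beta_i u_i=0$ and $\sum_{i\in\sigma}\beta_i\le c_2 d$, then the chain $\sum\beta_i|u_i\cdot v|\ge\sum\beta_i(u_i\cdot v)^2\ge c_1$, the exact cancellation $\sum\beta_i(u_i\cdot v)^+=\tfrac12\sum\beta_i|u_i\cdot v|$, and the resulting support-function bound $h_{\conv\{u_i\}}(v)\ge c_1/(2c_2d)$ together with $0\in\conv\{u_i:i\in\sigma\}$ do give $B^d\subset (2c_2/c_1)\,d\,\conv\{u_i:i\in\sigma\}$. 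But the entire difficulty of the conjecture is concentrated in the hypothesis of that implication, and you correctly diagnose that it is unproved.

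Concretely: Batson--Spielman--Srivastava sparsification of the lifted vectors $\hat u_i=(u_i,1)$ in dimension $d+1$ with budget $2d$ controls the full $(d+1)\times(d+1)$ matrix only up to a Loewner-order constant, so the off-diagonal block $b=\sum_{i\in\sigma}\beta_i u_i$ is only bounded, not zero; a Schur-complement estimate permits $|b|$ of order $\sqrt d$. Your support-function argument is not robust to this error --- the identity $\sum\beta_i(u_i\cdot v)^+=\tfrac12\sum\beta_i|u_i\cdot v|$ degrades to $\sum\beta_i(u_i\cdot v)^+\ge\tfrac12(c_1-|b|)$, which is vacuous once $|b|\ge c_1$, and in the worst case all the ``positive mass'' in direction $v$ can vanish. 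No known barrier-method variant enforces a $d$-dimensional exact linear constraint while keeping the count at $2d$ (each exact linear side condition generically costs additional surviving vectors), and no alternative extraction scheme achieving this is currently known. So the proposal is best read as a reduction of the conjecture to a constrained sparsification problem that is itself open; the conjecture remains unresolved.
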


That the above estimate would be asymptotically sharp is shown by taking $n=d+1$ and letting $\{u_1, \ldots, u_n \}$ to be the set of vertices of a regular simplex inscribed in $S^{d-1}$.

Note that we study quantitative Helly-type questions that require selecting {\em at most $2d$} sets, which is the smallest cardinality for which such estimates may hold. Versions obtained by relaxing this cardinality bound have been studied e.g. by Brazitikos~\cite{B17b}, Dillon and Soberón~\cite{DS21} and Ivanov and Naszódi~\cite{IN21}. In particular, an estimate which matches Theorem~\ref{thm:homothet} asymptotically was given in~\cite{IN21} when selecting $2d+1$ vertices of the polytope, and  an asymptotically sharp estimate for the quantitative
Helly-type theorem for the diameter was proved in~\cite{DS21} for sufficiently large sub-families. Further quantitative Helly-type results have been studied in~\cite{IN22b} (for log-concave functions) and \cite{VGM22} (continuous versions).

\section{Proofs}

\begin{proof}[Proof of Theorem~\ref{thm:homothet}]
    The condition $Q \subseteq -\lambda Q$ ensures that $0 \in \inter Q$.
    Among all simplices with $d$ vertices from the set of vertices of $Q$ and one vertex at the origin, consider a simplex $S = \conv\{0,v_1,\ldots,v_d\}$ with maximal volume.
    We write $S$ in the form
    \begin{equation}\label{eq:conv_S}
        S = \bigg\{ x \in \RR^d : x = \alpha_1 v_1 + \ldots + \alpha_d v_d \textrm{ for } \alpha_i \geq 0 \textrm{ and } \sum_{i=1}^d \alpha_i \leq 1 \bigg\}.
    \end{equation}
    For every $i \in [d]$, let $H_i$ be the hyperplane spanned by $\{0, v_1,\ldots,v_d\} \setminus \{v_i\}$,
    and let $L_i$ be the strip between the hyperplanes $v_i + H_i$ and $ -v_i + H_i$.
    Define $P = \bigcap_{i \in [d]} L_i$ (see Figure~\ref{fig1}).

    Note that
    \begin{equation}
        \label{eq:parallelotope_1}
        P = \{ x \in \RR^d : \vol_d(\conv( \{0,x,v_1,\ldots,v_d\} \setminus \{v_i\} ) \le \vol_d(S) \; \textrm{for all $i \in [d]$}\}.
    \end{equation}
    This follows from the volume formula
     \[
        \vol_d(\conv\{0,w_1,\ldots,w_d\}) =
        \frac{1}{d!} \big| \hspace{-1 pt} \det (w_1 \, w_2 \, \cdots \, w_d)\big|
    \]
    for arbitrary $w_1,\ldots,w_d \in \RR^d$, which implies that for all $x \in \RR^{d}$ of the form  $ x = c v_i + w$ with $w \in H_i$, $i \in [d]$,
    \[
        \vol_d(\conv( \{0,x,v_1,\ldots,v_d\} \setminus \{v_i\} ) = |c| \vol_d(S).
    \]

    Next, we show that
    \begin{equation}\label{eq:conv_P}
        P = \{ x \in \RR^d : x = \beta_1 v_1 + \ldots + \beta_d v_d \textrm{ for } \beta_i \in [-1,1] \}.
    \end{equation}
    Indeed, since $v_1,\ldots,v_d$ are linearly independent, we may consider the linear transformation $A$ with $A(v_i) = e_i$ for  $ i \in [d]$.
    Note that
    \begin{align*}
        A(P) = A \bigg( \bigcap_{i \in [d]} L_i \bigg)
        = \bigcap_{i \in [d]} A(L_i)
        = \{ x \in \RR^d : x = \beta_1 e_1 + \cdots + \beta_d e_d  \textrm{ for } \beta_i \in [-1,1]\}.
    \end{align*}
    Thus, \eqref{eq:conv_P} holds.

\begin{figure}[h]
\label{figure1}
\centering
  \includegraphics[width=0.55\textwidth]{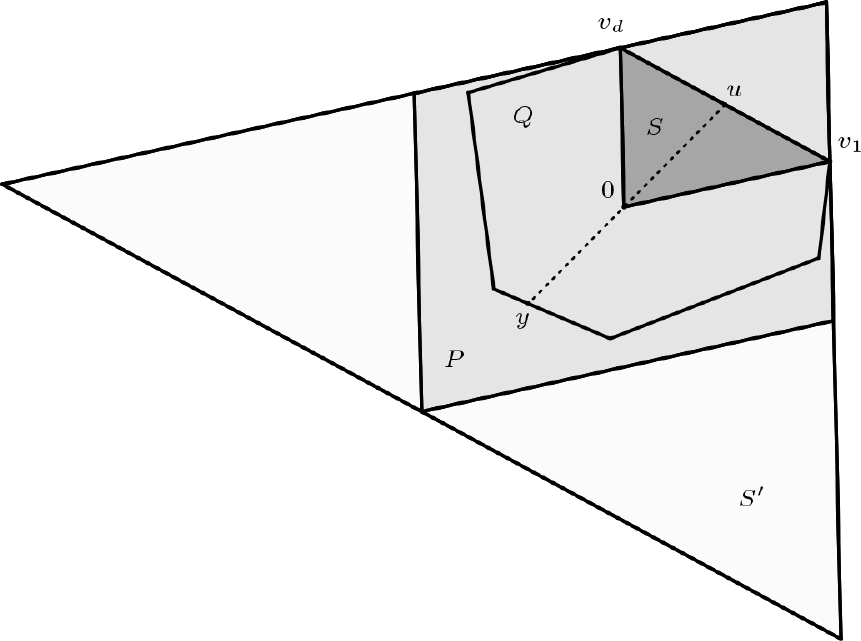}
\caption{}
\end{figure}

    Since $S$ is chosen maximally, equation \eqref{eq:parallelotope_1} shows that for any vertex $w$ of $Q$, $w \in P$. By convexity,
    \begin{equation}
        \label{eq:1}
        Q \subseteq P.
    \end{equation}
    Let $S' = -2dS + (v_1 + \ldots + v_d)$. By \eqref{eq:conv_S},
    \begin{equation}\label{eq:conv_S'}
        S' = \bigg\{x \in \RR^d : x = \gamma_1v_1 + \ldots + \gamma_dv_d \text{ for } \gamma_i \leq 1 \textrm{ and } \sum_{i= 1}^{d} \gamma_i \geq -d \bigg\}.
    \end{equation}
    Then, from \eqref{eq:conv_P} and \eqref{eq:conv_S'},
    \begin{equation}
        \label{eq:2}
        P \subseteq S'.
    \end{equation}

    Let $u = \frac{1}{d}(v_1 + \ldots + v_d)$ be the centroid of the facet $\conv\{v_1, \dots, v_d\}$ of $S$.
    Let $y$ be the intersection of the ray from $0$ through $-u$
    and the boundary of $Q$.
    By Carath\'eodory's theorem, we can choose $k \le d$ vertices $\{v_1',\ldots,v_k'\}$ of $Q$ such that
    $y \in \conv\{v_1',\ldots,v_k'\}$.
    Set $Q'= \conv \{v_1,\ldots,v_d,v_1',\ldots,v_k'\}$.


    Note that $[y,u] \subseteq Q'$, which implies $0 \in Q'$.
    Thus,
    \begin{equation}
        \label{eq:3}
        S \subseteq Q'.
    \end{equation}
    Since $Q \subseteq - \lambda Q$, we have that $-u \in  \lambda Q$.
    Since $ \lambda y$ is on the boundary of $ \lambda Q$, we also have that
    $-u \in [0, \lambda y]$.
    We know that $0, \lambda y \in \lambda  Q'$, so
    \begin{equation}
        \label{eq:5}
        u \in - \lambda  Q'.
    \end{equation}
    Combining \eqref{eq:1}, \eqref{eq:2}, \eqref{eq:3}, and
    \eqref{eq:5}:
    \begin{equation} \label{eq:6}
        Q \subseteq P \subseteq S'
        = -2d S + d  u
        \subseteq -2d  \, Q' - \lambda d \, Q'
        = -(\lambda + 2) d \, Q'.
        \qedhere
    \end{equation}
\end{proof}

\begin{proof}[Proof of Theorem~\ref{thm:main}]
    As shown in \cite{BKP82}, we may assume that the family  $\{K_1,\ldots,K_n\}$ consists of closed halfspaces such that $K = \bigcap K_i$ is a $d$-dimensional polytope.
    Let $T$ be the affine transformation sending $K$ to John's position.
    Let $\widetilde{K}_i = T{K}_i$ for $i \in [n]$, $\widetilde{K} = TK$, and for some $\sigma \subset [n]$, let $\widetilde{K}_{\sigma} = \bigcap_{i \in \sigma} \widetilde{K}_i$.
    We claim that there exists $\sigma \in \binom{[n]}{\le 2d}$ such that the following two properties hold:
    \begin{align}
        \widetilde{K}_{\sigma} &\subseteq -2d^2 \widetilde{K}, \textrm{ and} \label{eq:inclusion} \\
        \vol_d \widetilde{K}_{\sigma} &\le (cd)^{3d/2} \vol_d \widetilde{K} \label{eq:volume}
    \end{align}
    for some constant $c > 0$.
    Estimates~\eqref{eq:inclusion} and~\eqref{eq:volume} are affine invariant, so this will suffice to prove Theorem~\ref{thm:main}.

    Recall that since $\widetilde{K}$ is in John's position, $B^d \subseteq \widetilde{K} \subseteq d B^d$ (see \cite{B97} or \cite[Theorem 5.1]{GLMP04}).
    Setting $Q = (\widetilde{K})^{\circ}$, this yields that $\frac 1 d B^d \subseteq Q \subseteq B^d$ (here and later on, $K^\circ$ stands for the polar set: $K^\circ = \{ x \in \R^d: \langle x, y \rangle \leq 1 \ \forall y \in K \}$.)
    In particular, $Q \subseteq -d Q$. Hence, we may apply Theorem~\ref{thm:homothet} to $Q$ with $\lambda = d$, we obtain a subset of at most $2d$ vertices of $Q$ such that their convex hull $Q'$ satisfies
    \begin{equation}
    \label{eq:inclusion_1}
        Q \subseteq -(d + 2)d  Q' \subseteq - 2 d^2 Q'.
    \end{equation}
    The family of closed halfspaces supporting the facets of $(Q')^{\circ}$ is a subset of $\{\widetilde{K}_1,\ldots, \widetilde{K}_n\}$ with at most $2d$ elements.
    Thus, we can choose $\sigma \in \binom{[n]}{\le 2d}$ such that $\widetilde{K}_{\sigma} = (Q')^{\circ}$.
        Taking the polar of~\eqref{eq:inclusion_1}, we obtain
    \begin{equation*}
        \widetilde{K}_{\sigma} \subseteq - (d+2)d \widetilde{K} \subseteq -2d^2 \widetilde{K},
    \end{equation*}
    which shows~\eqref{eq:inclusion}.

    Let $P$ be the parallelotope enclosing $Q$ from the proof of Theorem~\ref{thm:homothet} and set $P' = -\frac{1}{2d^2} P$.
    Statement \eqref{eq:6} implies
    \[
        Q' \supseteq P'.
    \]
    Since $S$ is chosen maximally, the volume of $S$ is at least the volume of the simplex obtained from the Dvoretzky--Rogers lemma \cite{DR50} (see also \cite[Lemma 1.4]{N16}):
    \begin{equation}\label{eq:DR_lemma}
        \vol_d(S) \ge \frac{1}{\sqrt{d!}d^{d/2}}.
    \end{equation}
    Using \eqref{eq:DR_lemma},
    \begin{equation}\label{eq:vol_P'}
        \vol_d(P')
        = (2d^2)^{-d} \vol_d(P)
        = (2d^2)^{-d} \cdot 2^{d} d! \vol_d(S)
        \ge d^{-5d/2} (d!)^{1/2}.
    \end{equation}
    Note that $P'$ is centrally symmetric, so we can use the Blaschke--Santal\'o inequality (see \cite[Theorem 1.5.10]{AGM15}) for $P'$:
    \begin{equation}
        \label{eq:santalo}
        \vol_d (P') \cdot \vol_d ((P')^{\circ}) \le \vol_d (B_2^d)^2.
    \end{equation}
    Using the inclusions $\widetilde{K} \supseteq B_2^d$ and
    $\widetilde{K}_{\sigma} = (Q')^{\circ} \subseteq (P')^{\circ}$,
    as well as \eqref{eq:vol_P'} and  \eqref{eq:santalo}:
    \begin{equation}\label{eq:final}
        \frac{\vol_d \widetilde{K}_{\sigma}}{\vol_d \widetilde{K}}
        \le \frac{ \vol_d( (P')^{\circ})}{\vol_d( B_2^d)}
        \le \frac{ \vol_d( B_2^d)}{ \vol_d(P')}
        \le \frac{\pi^{d/2} d^{5d/2} (d!)^{-1/2} }{\Gamma((d/2) + 1)}
        \le (cd)^{3d/2}
    \end{equation}
    for some absolute constant $c > 0$.
    This shows \eqref{eq:volume} and concludes the proof.
\end{proof}

\noindent
{\em Remark.}
We briefly explain how Conjecture~\ref{conj:john-pos} would imply the asymptotically optimal bound on $v(d)$.  First note that the estimate \eqref{eq:inclusion_1} would hold with  the factor $cd$ instead of $2d^2$. Then, in the rest of the proof of Theorem~\ref{thm:main}, we could replace all instances of the  factor $2 d^2$  with $ cd$. In particular, one would get the linear upper bound  $\delta(d) \leq c d$  from the improvement of \eqref{eq:inclusion}, while the rest of the calculations would show that the final quotient in equation~\eqref{eq:final} is  at most $(c' d)^{d/2}$ for some absolute constant $c' > 0$.

\section{Acknowledgements}

This research was done under the auspices of the
Budapest Semesters in Mathematics program. We are grateful to the anonymous referees for their valuable comments on the article.

\bigskip

\bigskip

\bigskip

\noindent
{\sc V\'ictor Hugo Almendra-Hern\'andez}
\smallskip

\noindent
{\em Facultad de Ciencias, Universidad Nacional Aut\'onoma de M\'exico, Ciudad de M\'exico, M\'exico}
\smallskip

\noindent
e-mail address: \texttt{vh.almendra.h@ciencias.unam.mx}

\bigskip

\noindent
{\sc Gergely Ambrus}
\smallskip

\noindent
{\em Alfréd Rényi Institute of Mathematics, Eötvös Loránd Research Network, Budapest, Hungary and\\ Bolyai Institute, University of Szeged, Hungary}
\smallskip

\noindent
e-mail address: \texttt{ambrus@renyi.hu}

\bigskip

\noindent
{\sc Matthew Kendall}
\smallskip

\noindent
{\em Department of Mathematics, Princeton University, Princeton, NJ, USA}
\smallskip

\noindent
e-mail address: \texttt{mskendall@princeton.edu}

\end{document}